\documentclass[10pt]{article}       
\title{The Stanely-F\'eray-\'Sniady  formula for the generalized characters of the Symmetric Group}

\author{Fabio Scarabotti}

\usepackage{latexsym,amsbsy,amsmath,amscd,amssymb}  

\usepackage{amsthm}       
\usepackage{amssymb,amsmath,latexsym}
\textwidth = 16.00cm
\textheight = 22.00cm
\oddsidemargin = 0.12in
\evensidemargin = 0.12in

 \newtheorem{definition}{Definition} [section]       
        
 \newtheorem{example}[definition]{Example}

 \newtheorem{proposition}[definition]{Proposition}       
 \newtheorem{theorem}[definition]{Theorem}       
        
  \newtheorem{lemma}[definition]{Lemma}

\begin{document}

\maketitle

\begin{abstract}  
We show that the explicit formula of Stanley-F\'eray-\'Sniady  for the characters of the symmetric group have a natural extension to the generalized characters. These are the spherical functions of the unbalanced Gel'fand pair $(S_n\times S_{n-1},\text{diag}S_{n-1})$.
\footnote{{\it AMS 2002 Math. Subj. Class.}Primary: 20C30. Secondary: 20C15, 43A90

\it Keywords: Symmetric group; Gel'fand pair, spherical function, generalized character}
\end{abstract}

\section{Introduction}
Recently, two different explicit formulas have been found for the characters of the symmetric group, the Stanley-F\'eray formula, conjectured by R. P. Stanley \cite{Stanley} and proved by V. F\'eray \cite{Feray}, and the formula of M. Lassalle \cite{Lassalle} (se \cite{CST2} for an account). Actually, these are formulas for {\em spherical functions} rather than characters. Indeed, these formulas give the normalized characters obtained by dividing each of them by the dimension of the corresponding representation. These are the spherical functions of the Gel'fand pair $(S_n\times S_n,\text{diag}S_n)$, where $\text{diag}S_n=\{(\pi,\pi):\pi\in S_n\}$. In \cite{Strahov} E. Strahov showed that some of the classical results for the characters of the symmetric group may be extended to the spherical functions of the unbalanced Gel'fand pair $(S_n\times S_{n-1},\text{diag}S_{n-1})$, where $\text{diag}S_{n-1}=\{(\pi,\pi):\pi\in S_{n-1}\}$. This amounts to consider the algebra of all $S_{n-1}$-conjugacy invariant functions on $S_n$ rather than the $S_n$-conjugacy invariant functions. It is a natural problem to extend a result for the normalized characters to the generalized characters of the symmetric group. In the present paper we show that the Stanley-F\'eray formula, in the form proved by F\'eray and P. \'Sniady  in \cite{FeraySniady}, may be naturally extended to the generalized characters.

\section{Preliminaries}

We recall some basic facts on unbalanced Gel'fand pairs. We refer to \cite{CST1,CST2,Strahov,Travis} for more details and proofs (but we follows the notation in our joint monographs with T. Ceccherini-Silberstein and F. Tolli). If $X$ is a finite set, we denote by $L(X)$ the space of all complex valued functions defined on $X$. Let $G$ be a finite group. We say that $H\leq G$ is a {\em multiplicity free subgroup} of $G$ when $\text{Res}^G_H\sigma$ is a multiplicity free representation of $H$ for every irreducible representation $\sigma$ of $G$. We recall that the action of $G\times H$ on $G\equiv \frac{G\times H}{\text{diag}H}$ is $(g,h)\cdot g_0=gg_0h^{-1}$. The subgroup $H$ is multiplicity free if and only if $(G\times H,\text{diag}H)$ is a Gel'fand pair, if and only if the algebra of $H$-conjugacy invariant functions on $G$ is commutative. Let $\widehat{G}$ (resp. $\widehat{H}$) be a complete set of pairwise inequivalent (unitary) irreducible representation of $G$ (resp. $H$). For $\sigma\in\widehat{G}$ we denote by $\sigma'$ the adjoint of $\sigma$. If $\rho\in\widehat{H}$ and $\sigma\in\widehat{G}$, we write $\rho\leq\text{Res}^G_H\sigma$ to denote that $\rho$ is contained in $\text{Res}^G_H\sigma$; $\sigma \boxtimes \rho$ denotes the tensor product of $\sigma$ and $\rho$; $\chi^\sigma$ and $\chi^\rho$ are the characters of $\sigma$ and $\rho$ (they are not normalized: $\chi^\rho(1_G)$ is equal to the dimension $d_\rho$ of $\rho$). If $H$ is multiplicity free, the decomposition of permutation representation $\eta$ of $G\times H$ on $L(G)$ is the following:

\begin{equation}\label{abstractdec}
\eta\cong\bigoplus_{\sigma\in \widehat{G}}\bigoplus_{\substack{\rho\in\widehat{H}:\\ \rho\leq\text{Res}^G_H\sigma'}}(\sigma \boxtimes \rho).
\end{equation}

\noindent
In particular, for $H=G$ the $G\times G$-irreducible representation $\sigma\boxtimes \sigma'$ coincides with the $\sigma$-isotypic component in $L(G)$, that is the subspace of $L(G)$ spanned by the matrix coefficients of $\sigma$. The spherical function associated to $\sigma \boxtimes \rho$ has the following expression:

\[
\phi_{\sigma,\rho}(g)=\frac{1}{\lvert H\rvert}\sum_{h\in H}\overline{\chi^\sigma(gh)}\overline{\chi^\rho(h)}.
\]

\noindent
Following \cite{Strahov}, we call $\phi_{\sigma,\rho}$ a {\em generalized character} of $G$.

\begin{proposition}\label{propgenchar}
Suppose that $H$ is a multiplicity free subgroup of $G$. With the notation above we have:
\begin{enumerate}
\item
$\phi_{\sigma,\rho}(h)=\frac{1}{d_\rho}\chi^\rho(h)$ for all $h\in H$;

\item
if $\psi\in L(G)$ is $H$-conjugacy invariant, it belongs to the $\sigma$-isotypic component of $L(G)$ and $\psi(h)=\frac{1}{d_\rho}\chi^\rho(h)$ for all $h\in H$ then $\psi=\phi_{\sigma,\rho}$.
\end{enumerate}

\end{proposition}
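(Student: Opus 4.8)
The plan is to dispatch part (1) by a direct character computation and then to deduce part (2) from it by a dimension count inside the $\sigma$-isotypic component of $L(G)$.

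For part (1), I would start from the defining formula $\phi_{\sigma,\rho}(g)=\frac{1}{\lvert H\rvert}\sum_{h\in H}\overline{\chi^\sigma(gh)}\,\overline{\chi^\rho(h)}$, specialize to $g=h_0\in H$, and use that $\overline{\chi^\sigma}=\chi^{\sigma'}$ together with the multiplicity-free branching $\text{Res}^G_H\sigma'\cong\bigoplus_{\rho'\leq\text{Res}^G_H\sigma'}\rho'$ to rewrite $\overline{\chi^\sigma(h_0h)}=\sum_{\rho'\leq\text{Res}^G_H\sigma'}\chi^{\rho'}(h_0h)$. The sum over $h$ then splits into terms $\frac{1}{\lvert H\rvert}\sum_{h\in H}\chi^{\rho'}(h_0h)\,\overline{\chi^\rho(h)}$, each equal to $\delta_{\rho',\rho}\,\chi^\rho(h_0)/d_\rho$ by the Schur orthogonality relations on $H$ (equivalently, by the convolution identity $\chi^{\rho'}*\chi^\rho=\delta_{\rho',\rho}\frac{\lvert H\rvert}{d_\rho}\chi^\rho$ for irreducible characters). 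Since $\phi_{\sigma,\rho}$ is, by the notation above, attached to an actual constituent $\sigma\boxtimes\rho$ of $\eta$, one has $\rho\leq\text{Res}^G_H\sigma'$, so exactly one summand survives and one reads off $\phi_{\sigma,\rho}(h_0)=\frac{1}{d_\rho}\chi^\rho(h_0)$.

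For part (2), I would first use \eqref{abstractdec} to identify the $\sigma$-isotypic component of $L(G)$ with $\bigoplus_{\rho'\leq\text{Res}^G_H\sigma'}(\sigma\boxtimes\rho')$, and then invoke the Gel'fand property of $(G\times H,\text{diag}H)$: each irreducible $\sigma\boxtimes\rho'$ carries a one-dimensional space of $\text{diag}H$-fixed vectors, spanned by $\phi_{\sigma,\rho'}$. Hence every $H$-conjugacy invariant function lying in the $\sigma$-isotypic component has the form $\sum_{\rho'}c_{\rho'}\phi_{\sigma,\rho'}$; in particular $\psi=\sum_{\rho'}c_{\rho'}\phi_{\sigma,\rho'}$ for suitable scalars $c_{\rho'}$. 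Restricting to $H$ and applying part (1) gives $\psi|_H=\sum_{\rho'}\frac{c_{\rho'}}{d_{\rho'}}\chi^{\rho'}$; comparing with the hypothesis $\psi|_H=\frac{1}{d_\rho}\chi^\rho$ and using the linear independence of the (pairwise inequivalent) irreducible characters of $H$ forces $c_\rho=1$ and $c_{\rho'}=0$ for $\rho'\neq\rho$, so $\psi=\phi_{\sigma,\rho}$.

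I expect the only genuinely delicate point to be in part (1): handling the conjugated characters $\overline{\chi^\sigma}$ and $\overline{\chi^\rho}$ in accordance with the paper's adjoint and restriction conventions so that the character sum lands in exactly the shape of a Schur orthogonality relation, and verifying that the surviving term is precisely the one indexed by $\rho$ rather than by a conjugate of it. Part (2) is then essentially linear algebra built on part (1), its only representation-theoretic ingredients being multiplicity-one branching, the one-dimensionality of the $\text{diag}H$-invariants in each constituent $\sigma\boxtimes\rho'$, and the linear independence of irreducible characters.
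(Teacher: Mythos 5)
Your proposal is correct and follows essentially the same route as the paper: part (1) by expanding $\overline{\chi^\sigma}|_H=\sum_i\chi^{\rho_i}$ via the multiplicity-free branching and applying the convolution/orthogonality identity for irreducible characters of $H$, and part (2) by writing $\psi=\sum_i c_i\phi_{\sigma,\rho_i}$ and using part (1) together with the linear independence of the characters $\chi^{\rho_i}$ to force $c_i=\delta_{i,1}$. No gaps.
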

\begin{proof}

Suppose that $\text{Res}^G_H\sigma'=\bigoplus_{i=1}^m\rho_i$, with $\rho_1,\rho_2,\dotsc,\rho_m\in\widehat{H}$ (pairwise inequivalent) and $\rho_1=\rho$.

\begin{enumerate}

\item
For every $h\in H$ we have:

\[
\phi_{\sigma,\rho}(h)=\frac{1}{\lvert H\rvert}\sum_{t\in H}\overline{\chi^\sigma(th)}\chi^\rho(t^{-1})=\frac{1}{\lvert H\rvert}\left[\left(\sum_{i=1}^m\chi^{\rho_i}\right)*\chi^\rho\right](h)=\frac{1}{d_\rho}\chi^\rho(h).
\]

\item
Since $\psi$ is $H$-conjugacy invariant and belongs to the $\sigma$-isotypic component of $L(G)$ then $\psi=\sum_{i=1}^mc_i\phi_{\sigma,\rho_i}$, for suitable complex constants $c_1,c_2,\dotsc,c_m$. Therefore

\[
\frac{1}{d_\rho}\chi^\rho(h)=\psi(h)=\sum_{i=1}^mc_i\phi_{\sigma,\rho_i}(h)=\sum_{i=1}^m\frac{c_i}{d_{\rho_i}}\chi^{\rho_i}(h)\qquad \text{for all}\qquad h\in H
\]

\noindent
implies that $c_i=\delta_{i,1}$, that is $\psi=\phi_{\sigma,\rho}$.
\end{enumerate}
\end{proof}

\section{Brender's formula}
In this section we give a short proof of the main result in \cite{Brender}. It is a formula for the generalized character of the symmetric group analogous to (8) in \cite{FeraySniady}.
Let $S_n$ be the symmetric group of degree $n$. We think of it as the group of all permutations of the set $\{1,2,\dotsc,n\}$. We denote by $\widetilde{S}_{n-1}$ the stabilizer of $1$ in $S_n$. Then \cite{Brender,CST1,CST2,Strahov} $\widetilde{S}_{n-1}$ is a multiplicity free subgroup of $S_n$. If $\lambda\vdash n$ then $S^\lambda$ is the corresponding irreducible $S_n$-representation. We identify $\lambda\vdash n$ with its Young frame; if $\lambda\vdash n$ and $\mu\vdash n-1$ we write $\lambda\rightarrow \mu$ to denote that $\mu$ may obtained from $\lambda$ by removing one box (we denote by $\lambda\setminus\mu$ this box); note that Strahov draws the arrow in the opposite sense. Then the branching rule for the symmetric group may be written in the form:  $\text{Res}^{S_n}_{S_{n-1}}S^\lambda=\bigoplus_{\substack{\mu\vdash n-1:\\ \lambda\rightarrow \mu}}S^\mu$. Therefore, in the present setting, \eqref{abstractdec} is:

\[
L(S_n)=\bigoplus_{\lambda\vdash n}\bigoplus_{\substack{\mu\vdash n-1:\\ \lambda\rightarrow \mu}}\left(S^\lambda\boxtimes S^\mu\right).
\]

\noindent
We denote by $\phi_{\lambda,\mu}$ the generalized character associated to $S^\lambda\boxtimes S^\mu$.\\

We will use the following notation: a function $f\in L(S_n)$ will be identified with the formal sum $\sum_{\pi\in S_n}f(\pi)\pi$. If $t$ is a $\lambda$-tableau (an injective filling of the Young frame of $\lambda$ with the numbers $\{1,2,\dotsc,n\}$), we denote by $R_t$ (resp. $C_t$) the row (resp. the column) stabilizer of $t$. It is well known that the element

\[
E_t=\sum_{\gamma\in C_t}\sum_{\sigma\in R_t}\text{sign}(\gamma)\gamma\sigma,
\] 

\noindent
belongs to the $\lambda$-isotypic component of $L(S_n)$ \cite{FultonHarris,Simon}: it is a multiple of an idempotent that projects onto a minimal left ideal of $L(S_n)$ isomorphic to $S^\lambda$ (see also Exercise 10.6.7 in \cite{CST1} for a less standard proof). Denote by $\chi^\lambda$ the character of $S^\lambda$ and by $d_\lambda$ the dimension of $S_n$. We have:

\begin{equation}\label{charformula}
\chi^\lambda=\frac{d_\lambda}{n!}\sum_{\pi\in S_n}\pi E_t\pi^{-1}.
\end{equation}

\noindent
The proof is immediate: $f\longmapsto \frac{1}{n!}\sum_{\pi\in S_n}\pi f\pi^{-1}$ is the orthogonal projection from $L(S_n)$ onto the subalgebra of $S_n$-conjugacy invariant functions and the value of $E_t$ on $1_{S_n}$ is 1. See again Exercise 10.6.7 in \cite{CST1} or (VI.6.1) in \cite{Simon} (where $\frac{n!}{d_\mathcal{F}}$ must be replaced by $\frac{d_\mathcal{F}}{n!}$) or (8) in \cite{FeraySniady}.

\begin{proposition}\cite{}\label{Travisprop}
If $\lambda\vdash n$, $\mu\vdash n-1$, $\lambda\rightarrow\mu$ and $t$ is a $\lambda$-tableau with 1 in the box $\lambda\setminus\mu$ then:

\begin{equation}\label{Travisformula}
\phi_{\lambda,\mu}=\frac{1}{(n-1)!}\sum_{\pi\in \widetilde{S}_{n-1}}\pi E_t\pi^{-1}.
\end{equation}
\end{proposition}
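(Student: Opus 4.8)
The plan is to verify that the right-hand side of \eqref{Travisformula} satisfies the three hypotheses of part (2) of Proposition \ref{propgenchar}, with $G=S_n$, $H=\widetilde{S}_{n-1}$, $\sigma=S^\lambda$ (so $\sigma'\cong\sigma$ since the symmetric group is ambivalent) and $\rho=S^\mu$. Call the right-hand side $\psi:=\frac{1}{(n-1)!}\sum_{\pi\in\widetilde{S}_{n-1}}\pi E_t\pi^{-1}$. First, $\psi$ is manifestly $\widetilde{S}_{n-1}$-conjugacy invariant, being an average of conjugates of $E_t$ over all of $\widetilde{S}_{n-1}$. Second, since $E_t$ lies in the $\lambda$-isotypic component of $L(S_n)$ and that component is a two-sided ideal (it is $S^\lambda\boxtimes S^{\lambda'}$ as an $S_n\times S_n$-bimodule, hence stable under left and right multiplication by $S_n$), each conjugate $\pi E_t\pi^{-1}$ stays in it, and so does $\psi$. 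This takes care of the membership-in-the-$\sigma$-isotypic-component hypothesis.

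The substantive point is the third hypothesis: $\psi(h)=\frac{1}{d_\mu}\chi^\mu(h)$ for all $h\in\widetilde{S}_{n-1}$. The idea is to restrict everything to $\widetilde{S}_{n-1}$ and use the analogue of \eqref{charformula} inside this smaller symmetric group. Because $1$ sits in the box $\lambda\setminus\mu$, the tableau $t$ restricts to a $\mu$-tableau $t'$ on the boxes filled with $\{2,3,\dots,n\}$: deleting the box containing $1$ removes one cell from both a row and a column of $t$, so $R_{t'}=R_t\cap\widetilde{S}_{n-1}$ and $C_{t'}=C_t\cap\widetilde{S}_{n-1}$, and moreover $R_t=R_{t'}$ and $C_t=C_{t'}$ as subgroups of $S_n$ (the box with $1$ is a corner, so it contributes nothing to either stabilizer). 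Hence $E_t$, viewed as an element of $\mathbb{C}[S_n]$, literally equals $E_{t'}\in\mathbb{C}[\widetilde{S}_{n-1}]\subseteq\mathbb{C}[S_n]$. Then $\psi=\frac{1}{(n-1)!}\sum_{\pi\in\widetilde{S}_{n-1}}\pi E_{t'}\pi^{-1}$ is exactly the averaging of $E_{t'}$ over $\widetilde{S}_{n-1}$-conjugacy, so by \eqref{charformula} applied in $\widetilde{S}_{n-1}$ we get $\psi=\frac{d_\mu}{(n-1)!}\sum_{\pi\in\widetilde{S}_{n-1}}\pi E_{t'}\pi^{-1}\cdot\frac{1}{d_\mu}$; more precisely the same computation that proves \eqref{charformula} shows $\frac{1}{(n-1)!}\sum_{\pi\in\widetilde{S}_{n-1}}\pi E_{t'}\pi^{-1}=\frac{1}{d_\mu}\chi^\mu$ as a class function on $\widetilde{S}_{n-1}$. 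Thus $\psi(h)=\frac{1}{d_\mu}\chi^\mu(h)$ for all $h\in\widetilde{S}_{n-1}$, as required.

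With all three hypotheses of Proposition \ref{propgenchar}(2) checked, we conclude $\psi=\phi_{\lambda,\mu}$, which is \eqref{Travisformula}. The only slightly delicate point — and the one I would write out most carefully — is the identification $E_t=E_{t'}$: one must be sure that the box containing $1$ really is removable in the sense that it is simultaneously the last box of its row and the last box of its column within the tableau $t$ (i.e. that $\lambda\setminus\mu$ is a corner of $\lambda$, which is automatic from $\lambda\to\mu$), so that neither $R_t$ nor $C_t$ sees the entry $1$ at all, and hence the double sum defining $E_t$ over $C_t\times R_t$ is the same formal element of the group algebra as the double sum defining $E_{t'}$ over $C_{t'}\times R_{t'}$. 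Everything else is a transcription of the proof of \eqref{charformula} from $S_n$ to $\widetilde{S}_{n-1}\cong S_{n-1}$, together with the observation that $L(S_n)$-isotypic components are two-sided ideals.
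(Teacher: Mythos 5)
Your overall strategy coincides with the paper's: verify the three hypotheses of part 2 of Proposition \ref{propgenchar} for $\psi=\frac{1}{(n-1)!}\sum_{\pi\in\widetilde{S}_{n-1}}\pi E_t\pi^{-1}$, the first two being routine and the third being reduced to \eqref{charformula} applied inside $\widetilde{S}_{n-1}$. But the step you yourself single out as the delicate one is wrong: it is \emph{not} true that $R_t=R_{t'}$ and $C_t=C_{t'}$ as subgroups of $S_n$, hence not true that $E_t=E_{t'}$. The box $\lambda\setminus\mu$ is a corner of $\lambda$, i.e.\ the last box of its row and of its column, but that row and that column in general contain other boxes; the row (resp.\ column) stabilizer of $t$ permutes all the entries of each row (resp.\ column), so it certainly ``sees'' the entry $1$. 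Already for $\lambda=(2)$, $\mu=(1)$, with $1$ placed in the box $(1,2)$, one has $R_t=\{e,(12)\}$ while $R_{t'}=\{e\}$, so $E_t=e+(12)\neq e=E_{t'}$.

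The correct relation is $E_t=E_{t'}+\xi$, where $\xi=\sum\text{sign}(\gamma)\gamma\sigma$ runs over the pairs $\gamma\in C_t$, $\sigma\in R_t$ with $\gamma\notin\widetilde{S}_{n-1}$ or $\sigma\notin\widetilde{S}_{n-1}$ (the remaining pairs form exactly $C_{t'}\times R_{t'}$, by your own correct observation that $R_{t'}=R_t\cap\widetilde{S}_{n-1}$ and $C_{t'}=C_t\cap\widetilde{S}_{n-1}$). What saves the argument --- and what your proposal is missing --- is that every such product $\gamma\sigma$ moves the point $1$: if $\sigma(1)=j\neq 1$, then $j$ lies in the same row of $t$ as $1$, hence in a different column, so $\gamma(j)\neq 1$; and if $\sigma(1)=1$, then $\gamma\notin\widetilde{S}_{n-1}$ forces $\gamma(\sigma(1))=\gamma(1)\neq 1$. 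Therefore $\xi$, and likewise $\frac{1}{(n-1)!}\sum_{\pi\in\widetilde{S}_{n-1}}\pi\xi\pi^{-1}$, is supported outside $\widetilde{S}_{n-1}$ and contributes nothing to the values $\psi(h)$ for $h\in\widetilde{S}_{n-1}$; only then does \eqref{charformula} applied to $\widetilde{S}_{n-1}$ and $E_{t'}$ give $\psi(h)=\frac{1}{d_\mu}\chi^\mu(h)$. With this repair your argument becomes exactly the paper's proof.
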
 
\begin{proof}
The right hand side of \eqref{Travisformula} is $\widetilde{S}_{n-1}$-conjugacy invariant and belongs to the $S^\lambda$-isotypic component of $L(S_n)$. Moreover, following Travis we may write $E_t=E_{t'}+\xi$, where $t'$ is the $\mu$-tableau obtained removing the box containing 1 and

\[
\xi=\sum_{\substack{\gamma\in C_t, \sigma\in R_t\\ \text{but}\;\gamma\notin\widetilde{S}_{n-1}\;\text{or}\;\sigma\notin\widetilde{S}_{n-1}}}\text{sign}(\gamma)\gamma\sigma.
\]

\noindent
Both $\xi$ and $\frac{1}{(n-1)!}\sum_{\pi\in \widetilde{S}_{n-1}}\pi \xi\pi^{-1}$ do not contain elements of $\widetilde{S}_{n-1}$: if 
$\gamma\in C_t$, $\sigma\in R_t$ but $\gamma\notin\widetilde{S}_{n-1}$ or $\sigma\notin\widetilde{S}_{n-1}$ then $\gamma\sigma\notin\widetilde{S}_{n-1}$. Therefore from \eqref{charformula} applied to $\widetilde{S}_{n-1}$ we get:

\[
\frac{1}{(n-1)!}\sum_{\pi\in \widetilde{S}_{n-1}}\pi E_t\pi^{-1}=\frac{1}{d_\mu}\chi^\mu+\frac{1}{(n-1)!}\sum_{\pi\in \widetilde{S}_{n-1}}\pi \xi\pi^{-1}.
\]

\noindent
Now invoking 2. in Proposition \ref{propgenchar} we get the desired result.
\end{proof}

\section{The Stanley-F\'eray-\'Sniady  formula for the generalized characters}

Let $\lambda$, $\mu$ and $t$ be as in Proposition \ref{Travisprop}.  For $\gamma,\sigma\in S_n$ we set:

\[
\begin{split}
\widetilde{N}^{\lambda,\mu}=&\text{ the number of } \pi\in \widetilde{S}_{n-1}\text{ such that: each cycle of } \gamma \text{ is contained in a column of }\pi t \text{ and}\\ 
&\text{ each cycle of } \sigma \text{ is contained in a row of } \pi t.
\end{split}
\]

\noindent
As in \cite{FeraySniady}, if $\square$ is a box of $\lambda$ we denote by $r(\square)$ and $c(\square)$ respectively the row and the column to which $\square$ belongs. Note also that in our notation, $S_{n-1}$ is the stabilizer of $n$; more generally, $S_l\leq S_n$, $1\leq l\leq n$, is the symmetric group on $\{1,2,\dotsc,l\}$ and we will need to consider elements $\pi$ in $S_l$ but not in $\widetilde{S}_{n-1}$, that is permutations of $\{1,2,\dotsc,l\}$ that do not fix 1. Indeed, 1. in Proposition \ref{propgenchar} tells us that the value of a generalized character $\phi_{\lambda,\mu}$ on an element $\pi\in \widetilde{S}_{n-1}$ is given by the formula for the classical characters. For $\gamma,\sigma\in S_l$, with $2\leq l\leq n$, we set

\[
\begin{split}
\widetilde{N}^{\lambda,\mu}_{S_l}(\gamma,\sigma) =& \text{ the number of one-to-one maps } f:\{1,2,\dotsc,l\}\longrightarrow \lambda \text{ such that: } f(1)=\lambda\setminus\mu, \\
&\quad c\circ f \text{ is constant on each cycle of } \gamma  \text{ and }r\circ f \text{ is constant on each cycle of } \sigma,
\end{split}
\]

\noindent
and (removing the injectivity)

\[
\begin{split}
\widehat{N}^{\lambda,\mu}(\gamma,\sigma) =& \text{ the number of functions } f:\{1,2,\dotsc,l\}\longrightarrow \lambda \text{ such that: } f(1)=\lambda\setminus\mu, \\
&\quad c\circ f \text{ is constant on each cycle of } \gamma  \text{ and }r\circ f \text{ is constant on each cycle of } \sigma.
\end{split}
\]

\noindent
Note that if $\gamma,\sigma\in S_l$ then 

\begin{equation}\label{Ntilde}
\widetilde{N}^{\lambda,\mu}(\gamma,\sigma)=(n-l)!\widetilde{N}^{\lambda,\mu}_{S_l}(\gamma,\sigma).
\end{equation}

\noindent
Indeed, when we compute $\widetilde{N}^{\lambda,\mu}(\gamma,\sigma)$ we need to determine the positions of $1,2,\dotsc,l$ in $\pi t$, while the positions of $l+1,l+2,\dotsc,n$ may be choosen arbitrarily. We recall that $(x)_k=x(x-1)\dotsb (x-l+1)$.

\begin{lemma}\label{lemmapreliminare}
If $\theta\in S_l$ then

\[
\phi_{\lambda,\mu}(\theta)=\frac{1}{(n-1)_{l-1}}\sum_{\substack{\gamma,\sigma\in S_l:\\ \gamma\sigma=\theta}}\text{\rm sign}(\gamma)\widehat{N}^{\lambda,\mu}(\gamma,\sigma).
\]

\end{lemma}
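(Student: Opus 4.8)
The plan is to start from Proposition \ref{Travisprop}, which expresses $\phi_{\lambda,\mu}$ as $\frac{1}{(n-1)!}\sum_{\pi\in\widetilde{S}_{n-1}}\pi E_t\pi^{-1}$, and then evaluate the coefficient of $\theta$ in this formal sum. Expanding $E_t=\sum_{\gamma\in C_t}\sum_{\rho\in R_t}\text{sign}(\gamma)\gamma\rho$ and conjugating, $\pi E_t\pi^{-1}=\sum_{\gamma\in C_t,\rho\in R_t}\text{sign}(\gamma)(\pi\gamma\pi^{-1})(\pi\rho\pi^{-1})$. Writing $\gamma'=\pi\gamma\pi^{-1}$ and $\sigma=\pi\rho\pi^{-1}$, the condition $\gamma\in C_t$ becomes ``each cycle of $\gamma'$ is contained in a column of $\pi t$'' and $\rho\in R_t$ becomes ``each cycle of $\sigma$ is contained in a row of $\pi t$'', while $\text{sign}(\gamma')=\text{sign}(\gamma)$. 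So the coefficient of $\theta$ in $\frac{1}{(n-1)!}\sum_{\pi}\pi E_t\pi^{-1}$ is $\frac{1}{(n-1)!}\sum_{\gamma',\sigma:\gamma'\sigma=\theta}\text{sign}(\gamma')\,\widetilde{N}^{\lambda,\mu}(\gamma',\sigma)$, where $\widetilde{N}^{\lambda,\mu}(\gamma',\sigma)$ counts the $\pi\in\widetilde{S}_{n-1}$ for which the column/row containment holds — exactly the quantity defined in the paper. Thus $\phi_{\lambda,\mu}(\theta)=\frac{1}{(n-1)!}\sum_{\gamma\sigma=\theta}\text{sign}(\gamma)\widetilde{N}^{\lambda,\mu}(\gamma,\sigma)$.

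Next I would translate this into the $\widehat{N}$ formulation. The subtlety is that in $\phi_{\lambda,\mu}(\theta)$ with $\theta\in S_l$, the decompositions $\gamma\sigma=\theta$ can in principle involve $\gamma,\sigma$ that move points outside $\{1,\dots,l\}$; but since $\theta$ fixes every point $>l$, and writing $\gamma\sigma=\theta$, any point $>l$ moved by $\sigma$ must be moved back by $\gamma$, so a ``transfer'' argument is needed. The cleanest route: first reduce to $\gamma,\sigma\in S_l$. Given $\gamma,\sigma\in S_n$ with $\gamma\sigma=\theta\in S_l$, consider the points outside $\{1,\dots,l\}$; one shows that the contribution of pairs not both in $S_l$ either cancels in the signed sum or can be absorbed. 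Actually the more transparent approach, and the one I would follow, is to pass directly to the one-to-one map $f$: a $\pi\in\widetilde{S}_{n-1}$ together with the column/row conditions is the same data as a bijection $f:\{1,\dots,n\}\to\lambda$ with $f(1)=\lambda\setminus\mu$, $c\circ f$ constant on cycles of $\gamma$, $r\circ f$ constant on cycles of $\sigma$, namely $f$ records where $\pi t$ places each number. Restricting $f$ to $\{1,\dots,l\}$ and using \eqref{Ntilde}, $\widetilde{N}^{\lambda,\mu}(\gamma,\sigma)=(n-l)!\,\widetilde{N}^{\lambda,\mu}_{S_l}(\gamma,\sigma)$ for $\gamma,\sigma\in S_l$. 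Combined with $(n-1)!=(n-1)_{l-1}\cdot(n-l)!$, the prefactor becomes $\frac{1}{(n-1)_{l-1}}$, and the sum over $\gamma,\sigma\in S_n$ collapses to a sum over $\gamma,\sigma\in S_l$ at the price of replacing $\widetilde{N}$ (injective, over all of $S_n$) by $\widehat{N}$ (not necessarily injective, over $S_l$).

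The key combinatorial identity making this replacement legitimate is: for $\theta\in S_l$,
\[
\sum_{\substack{\gamma,\sigma\in S_l\\ \gamma\sigma=\theta}}\text{sign}(\gamma)\,\widehat{N}^{\lambda,\mu}(\gamma,\sigma)
=\sum_{\substack{\gamma,\sigma\in S_l\\ \gamma\sigma=\theta}}\text{sign}(\gamma)\,\widetilde{N}^{\lambda,\mu}_{S_l}(\gamma,\sigma).
\]
This is the inclusion--exclusion / cancellation heart of the Stanley--F\'eray--\'Sniady argument as in \cite{FeraySniady}: given a non-injective $f$ counted by $\widehat{N}^{\lambda,\mu}(\gamma,\sigma)$, there are two smallest numbers $i<j$ with $f(i)=f(j)$ lying in the same box, hence in the same row and the same column; since $\square=f(i)=f(j)$ is not the box $\lambda\setminus\mu$ (as $f(1)$ is), both $i,j\ge 2$, and one pairs $(\gamma,\sigma)$ with $(\gamma\,(i\,j),(i\,j)\,\sigma)$ — this swaps a column-cycle-merge with a row-cycle-merge, preserves the product $\gamma\sigma=\theta$, leaves $f$ an admissible map for the new pair, and flips $\text{sign}(\gamma)$; the fixed points of this involution are precisely the injective $f$'s. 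I expect this cancellation argument to be the main obstacle, the one requiring genuine care: one must check that $(i\,j)$ really does belong to $C_{t}$-type relations on one side and $R_{t}$-type on the other (which follows from $i,j$ occupying the same row \emph{and} column of the tableau determined by $f$), that the involution is well-defined independent of choices, and that it interacts correctly with the constraint $f(1)=\lambda\setminus\mu$. Once this is in hand, assembling the three ingredients — the coefficient extraction from Proposition \ref{Travisprop}, the passage from $S_n$ to $S_l$ via \eqref{Ntilde}, and the sign-reversing involution — gives the stated formula.
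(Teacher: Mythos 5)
Your overall route is the paper's: extract the coefficient of $\theta$ from Proposition \ref{Travisprop} to obtain $\phi_{\lambda,\mu}(\theta)=\frac{1}{(n-1)!}\sum_{\gamma\sigma=\theta}\mathrm{sign}(\gamma)\,\widetilde{N}^{\lambda,\mu}(\gamma,\sigma)$, restrict the sum to $\gamma,\sigma\in S_l$, convert via \eqref{Ntilde}, and finish with the sign-reversing involution. Your elaboration of the involution is essentially correct and more detailed than the paper, which simply cites (10) of \cite{FeraySniady} for that identity.

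The one genuine gap is the restriction of the sum from $\gamma,\sigma\in S_n$ to $\gamma,\sigma\in S_l$. You correctly flag that this needs an argument, but you defer it (``one shows that the contribution of pairs not both in $S_l$ either cancels in the signed sum or can be absorbed''), and the mechanism you suggest --- signed cancellation --- is not what happens and would require its own involution. The actual argument is term-by-term vanishing: if $\gamma\sigma=\theta$, $\theta(i)=i$, $\sigma(i)=j$ and hence $\gamma(j)=i$ with $i\neq j$, then for $\widetilde{N}^{\lambda,\mu}(\gamma,\sigma)$ to be nonzero there must be a $\pi$ such that $i$ and $j$, lying in a common cycle of $\sigma$, occupy the same row of $\pi t$ and, lying in a common cycle of $\gamma$, occupy the same column of $\pi t$; two distinct entries of a tableau cannot do both. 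Hence $\widetilde{N}^{\lambda,\mu}(\gamma,\sigma)=0$ unless $\mathrm{supp}(\gamma),\mathrm{supp}(\sigma)\subseteq\mathrm{supp}(\theta)$, and the sum collapses to $S_l$ with no cancellation needed. A smaller point: in the involution you assert $i,j\geq 2$ because $f(i)=f(j)$ ``is not the box $\lambda\setminus\mu$''; nothing in the definition of $\widehat{N}^{\lambda,\mu}$ prevents $f(j)=\lambda\setminus\mu$ for some $j>1$, so this claim is unjustified --- but it is also unnecessary, since the involution $(\gamma,\sigma)\mapsto(\gamma\,(i\,j),(i\,j)\,\sigma)$ keeps $f$ fixed and therefore preserves the condition $f(1)=\lambda\setminus\mu$ even when $i=1$.
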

\begin{proof}
We may rewrite \eqref{Travisformula} in the form

\[
\phi_{\lambda,\mu}=\frac{1}{(n-1)!}\sum_{\pi\in \widetilde{S}_{n-1}}\sum_{\gamma\in C_{\pi t}}\sum_{\sigma\in R_{\pi t}}\text{sign}(\gamma)\gamma\sigma.
\]

\noindent
Since 

\[
\begin{split}
&\gamma\in C_{\pi t}\Longleftrightarrow \text{ each cycle of }\gamma \text{ is contained in a column of }\pi t\\
&\sigma\in R_{\pi t}\Longleftrightarrow \text{ each cycle of }\sigma \text{ is contained in a row of }\pi t
\end{split}
\]

\noindent
we deduce that

\[
\phi_{\lambda,\mu}(\theta)=\frac{1}{(n-1)!}\sum_{\substack{\gamma,\sigma\in S_n:\\ \gamma\sigma=\theta}}\text{sign}(\gamma)\widetilde{N}^{\lambda,\mu}(\gamma,\sigma)
\]

\noindent
Suppose that $\theta(i)=i$, $\sigma(i)=j$ and $\gamma(j)=i$, with $i\neq j$. If $i,j$ are contained in a row of $\pi t$ thay cannot be contained in a column of $\pi t$ and viceversa. Therefore if $\widetilde{N}^{\lambda,\mu}(\gamma,\sigma)\neq0$ and $\gamma\sigma=\theta$ then supp$(\gamma)$, supp$(\sigma)\subseteq$ supp$(\theta)$.
In particular, if $\theta\in S_l$, the sum may be restricted to $\gamma,\sigma\in S_l$; keeping into account \eqref{Ntilde} we get

\[
\phi_{\lambda,\mu}(\theta)=\frac{1}{(n-1)_{l-1}}\sum_{\substack{\gamma,\sigma\in S_l:\\ \gamma\sigma=\theta}}\text{sign}(\gamma)\widetilde{N}^{\lambda,\mu}_{S_l}(\gamma,\sigma).
\]

\noindent
One can end the proof using the identity

\[
\sum_{\substack{\gamma,\sigma\in S_l:\\ \gamma\sigma=\theta}}\text{sign}(\gamma)\widetilde{N}^{\lambda,\mu}_{S_l}(\gamma,\sigma)=\sum_{\substack{\gamma,\sigma\in S_l:\\ \gamma\sigma=\theta}}\text{sign}(\gamma)\widehat{N}^{\lambda,\mu}(\gamma,\sigma),
\]

\noindent
which has the same proof of (10) in \cite{FeraySniady}.

\end{proof}

We denote by $C(\pi)$ the set of cycles of a permutation $\pi$. A coloring of the cycles of $\gamma,\sigma$ is a function $h:C(\gamma)\sqcup C(\sigma)\longrightarrow \mathbb{N}$. We set:

\[
\begin{split}
N^{\lambda,\mu}(\gamma,\sigma)=&\text{ the number of colorings } h \text{ of the cycles of } \gamma \text{ and } \sigma \text{ such that:}\\
&\quad -\text{the color of each cycle of } \gamma \text{ is a column of }\lambda;\\
&\quad -\text{the color of each cycle of } \sigma \text{ is a row of }\lambda;\\
&\quad -\text{the color of the cycle of }\gamma\text{ containing 1 is } c(\lambda\setminus\mu);\\
&\quad -\text{the color of the cycle of }\sigma\text{ containing 1 is } r(\lambda\setminus\mu);\\
&\quad -\text{if } c_1\in C(\gamma), c_2\in C(\sigma) \text{ and }c_1\cap c_2\neq\emptyset\text{ then } (h(c_1),h(c_2))\\
&\qquad\text{ are the coordinates of a box in }\lambda.
\end{split}
\]

\noindent
Now we can enunciate and prove the analogous of Theorem 2 in \cite{FeraySniady} for the generalized characters.

\begin{theorem}
If $\theta\in S_l$ then

\[
\phi_{\lambda,\mu}(\theta)=\frac{1}{(n-1)_{l-1}}\sum_{\substack{\gamma,\sigma\in S_l:\\ \gamma\sigma=\theta}}\text{\rm sign}(\gamma)N^{\lambda,\mu}(\gamma,\sigma).
\]
\end{theorem}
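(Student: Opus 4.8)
The plan is to deduce the statement directly from Lemma~\ref{lemmapreliminare}. That lemma already gives
\[
\phi_{\lambda,\mu}(\theta)=\frac{1}{(n-1)_{l-1}}\sum_{\substack{\gamma,\sigma\in S_l:\\ \gamma\sigma=\theta}}\text{sign}(\gamma)\,\widehat{N}^{\lambda,\mu}(\gamma,\sigma),
\]
so it is enough to prove the term-by-term identity $\widehat{N}^{\lambda,\mu}(\gamma,\sigma)=N^{\lambda,\mu}(\gamma,\sigma)$ for each fixed pair $\gamma,\sigma\in S_l$ (the relation $\gamma\sigma=\theta$ plays no role at this stage). This is precisely the reinterpretation of non-injective functions into the Young frame as pairs of colorings of cycles used by F\'eray and \'Sniady in \cite{FeraySniady}.

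To prove $\widehat{N}^{\lambda,\mu}(\gamma,\sigma)=N^{\lambda,\mu}(\gamma,\sigma)$ I would set up an explicit bijection between the functions $f\colon\{1,2,\dotsc,l\}\to\lambda$ counted by $\widehat{N}^{\lambda,\mu}(\gamma,\sigma)$ and the colorings $h\colon C(\gamma)\sqcup C(\sigma)\to\mathbb{N}$ counted by $N^{\lambda,\mu}(\gamma,\sigma)$. Since $\gamma,\sigma\in S_l$, regarding fixed points as cycles of length one, both $C(\gamma)$ and $C(\sigma)$ are set partitions of $\{1,2,\dotsc,l\}$. Given $f$, since $c\circ f$ is constant on each cycle of $\gamma$ and $r\circ f$ is constant on each cycle of $\sigma$, define $h(c_1)$ to be the common value of $c\circ f$ on $c_1\in C(\gamma)$ and $h(c_2)$ the common value of $r\circ f$ on $c_2\in C(\sigma)$. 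Conversely, given $h$, for $i\in\{1,2,\dotsc,l\}$ let $c_1\in C(\gamma)$ and $c_2\in C(\sigma)$ be the two cycles through $i$ and let $f(i)$ be the box of $\lambda$ with column $h(c_1)$ and row $h(c_2)$; this box exists precisely because $c_1\cap c_2\ni i$, so the compatibility clause in the definition of $N^{\lambda,\mu}(\gamma,\sigma)$ applies. The two constructions are mutually inverse.

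It then remains to check that the defining constraints on the two sides correspond under this bijection: the requirement that $h$ take column values on $C(\gamma)$ and row values on $C(\sigma)$ corresponds to $f$ having image in $\lambda$; the requirement that $(h(c_1),h(c_2))$ be the coordinates of a box of $\lambda$ whenever $c_1\cap c_2\neq\emptyset$ is exactly what makes the reconstructed $f$ well defined (every intersecting pair of cycles meets at some $i\in\{1,2,\dotsc,l\}$); and $f(1)=\lambda\setminus\mu$ corresponds to $h$ sending the $\gamma$-cycle through $1$ to $c(\lambda\setminus\mu)$ and the $\sigma$-cycle through $1$ to $r(\lambda\setminus\mu)$. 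Granting the bijection we get $\widehat{N}^{\lambda,\mu}(\gamma,\sigma)=N^{\lambda,\mu}(\gamma,\sigma)$, and substituting into Lemma~\ref{lemmapreliminare} gives the theorem.

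I expect the only delicate point to be bookkeeping rather than anything deep: one must treat the fixed points of $\gamma$ and of $\sigma$ as singleton cycles, so that $C(\gamma)$ and $C(\sigma)$ really do partition $\{1,2,\dotsc,l\}$ (otherwise a coloring would fail to determine $f$ on those points), and one must observe that the ``coordinates of a box in $\lambda$'' clause is precisely the well-definedness condition for the reconstruction. No sign analysis or cancellation beyond what is already contained in Lemma~\ref{lemmapreliminare} is required, since the identity holds term by term.
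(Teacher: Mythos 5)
Your proposal is correct and follows essentially the same route as the paper: the theorem is reduced to Lemma \ref{lemmapreliminare} together with the term-by-term identity $\widehat{N}^{\lambda,\mu}(\gamma,\sigma)=N^{\lambda,\mu}(\gamma,\sigma)$, established by the same natural bijection $f(m)=(h(c_1),h(c_2))$ for $m\in c_1\cap c_2$. You merely spell out more carefully than the paper the inverse construction and the role of singleton cycles, which is a welcome but inessential elaboration.
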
 
\begin{proof}
It follows from Lemma \ref{lemmapreliminare} and the identity: $N^{\lambda,\mu}(\gamma,\sigma)=\widehat{N}^{\lambda,\mu}(\gamma,\sigma)$. This may be proved by means of 
the following natural bijection $h\longmapsto f$ between the colorings $h$ counted by $N^{\lambda,\mu}(\gamma,\sigma)$ and the functions $f$ counted by
$\widehat{N}^{\lambda,\mu}(\gamma,\sigma)$:

\[
f(m)=(h(c_1),h(c_2))\qquad\text{ if }\qquad c_1\in C(\gamma), c_2\in C(\sigma) \text{ and } m\in c_1\cap c_2.
\]

\end{proof}

\begin{example}
{\rm
Suppose that $r(\lambda\setminus\mu)=i$ and $c(\lambda\setminus\mu)=j$. If $\lambda=(\lambda_1,\lambda_2,\dotsc,\lambda_k)$ and the conjugate partition is $\lambda'=(\lambda_1',\lambda'_2,\dotsc,\lambda'_h)$ then $j=\lambda_i$ and $i=\lambda'_j$. Moreover, 

\[
N^{\lambda,\mu}((1)(2),(12))=\lambda_i\qquad\qquad\text{and}\qquad\qquad N^{\lambda,\mu}((12),(1)(2))=\lambda'_j.
\]

\noindent
Therefore

\[
\phi_{\lambda,\mu}((12))=\frac{\lambda_i-\lambda'_j}{n-1}.
\]

\noindent
This formula, in a slightly different form and by means of a completely different method, was found by P. Diaconis in \cite{Diaconis}, (5.10).   

}
\end{example}

\qquad\\
\qquad\\
\noindent
FABIO SCARABOTTI, Dipartimento SBAI-Sezione Matematica, Universit\`a degli Studi di Roma ``La Sapienza'', via A. Scarpa 8, 00161 Roma (Italy)\\
{\it e-mail:} {\tt scarabot@dmmm.uniroma1.it}\\


\begin{thebibliography}{99}
\bibitem{Brender}  Brender, M.: Spherical functions on the symmetric groups. {\it J. Algebra} 42 (1976), no. 2, 302–-314.
\bibitem{CST1} Ceccherini-Silberstein, T., Scarabotti F.  and Tolli, F.: {\it Harmonic analysis on finite groups. Representation theory, Gelfand pairs and Markov chains}. Cambridge Studies in Advanced Mathematics, 108. Cambridge University Press, Cambridge, 2008. 
\bibitem{CST2} Ceccherini-Silberstein, T.,  Scarabotti, F. and Tolli, F.: {\it Representation Theory of the Symmetric Groups, The Okounkov-Vershik Approach, Character Formulas, and Partition Algebras}. Cambridge Studies in Advanced Mathematics, 121. Cambridge University Press, Cambridge, 2010.
\bibitem{Diaconis} Diaconis,P.: Applications of noncommutative Fourier analysis to probability problems. École d'Été de Probabilités de Saint-Flour XV–XVII, 1985–87, 51–-100, {\it Lecture Notes in Math.}, 1362, Springer, Berlin, 1988.
\bibitem{Feray} F\`eray, V.: Stanley's formula for characters of the symmetric group. {\it Ann. Comb.} 13 (2010), no. 4, 453–-461. 
\bibitem{FeraySniady} F\'eray, V., \'Sniady, P.:  Asymptotics of characters of symmetric groups related to Stanley-Feray character formula, {\it Ann. of Math.}, to appear
\bibitem{FultonHarris} Fulton, W., Harris, J.: {\it Representation theory. 
A first course}. Graduate Texts in Mathematics, 129. Readings in Mathematics. Springer-Verlag, New York, 1991. 
\bibitem{Lassalle} Lassalle, M.: An explicit formula for the characters of the symmetric group. {\it Math. Ann.} 340 (2008), no. 2, 383-–405. 
\bibitem{Simon} Simon, B.: {\it Representations of finite and compact groups}. 
Graduate Studies in Mathematics, 10. American Mathematical Society, Providence, RI, 1996.
\bibitem{Stanley} Stanley, R.: A conjectured combinatorial interpretation of the normalized irreducible character values of the symmetric group. arXiv:math.CO/0606467 (2006) 
\bibitem{Strahov} Strahov, E.: Generalized characters of the symmetric group. {\it Adv. Math.} 212 (2007), no. 1, 109–-142.
\bibitem{Travis}Travis, D.: Spherical functions on finite groups. {\it J. Algebra} 29 (1974), 65–-76.
\end{thebibliography}
\end{document}